\documentclass[journal,twoside,web]{ieeecolor}
 
\usepackage{lcsys}
\usepackage{generic}
\usepackage[colorlinks=true,linkcolor=blue, filecolor=blue, urlcolor=blue,citecolor=blue]{hyperref}
\usepackage{amsmath,amssymb,amsthm}
\usepackage{times}
\usepackage{mathptmx} 
\usepackage{textcomp}
\usepackage{graphicx}
\usepackage{epsfig}
\usepackage{caption}
\usepackage{subfig}   % if you are using \subfloat
\usepackage{float}
\usepackage{cite}
\usepackage[dvipsnames]{xcolor}
 
\usepackage{tikz}
\usepackage{pgfplots}
\pgfplotsset{compat = newest}
\usepackage{pgfplotstable}
\usetikzlibrary{positioning, arrows.meta}
\usepgfplotslibrary{fillbetween}
\newtheorem{theorem}{Theorem}[section]
\newtheorem{lemma}[theorem]{Lemma}

\newtheorem{corollary}[theorem]{Corollary}
\theoremstyle{definition}
\newtheorem{definition}{Definition}[section]
\newtheorem{assumption}[definition]{Assumption}

\theoremstyle{remark} 
\newtheorem{remark}{Remark}[section]
\newenvironment{mymatrix}
{\footnotesize\begin{bmatrix}}
	{\end{bmatrix}\normalsize}

\newcommand{\tb}[1]{\textcolor{black}{#1}}
\newcommand{\Z}{\mathrm{0}}
\newcommand{\I}{\mathrm{I}}

\DeclareMathOperator{\vech}{vech}

\newcommand{\col}{\operatorname{col}}
\newcommand{\row}{\operatorname{row}}
\newcommand{\blkdiag}{\operatorname{blkdiag}}
\newcommand{\vecop}{\operatorname{vec}}

\begin{document}

\def\BibTeX{{\rm B\kern-.05em{\sc i\kern-.025em b}\kern-.08em
    T\kern-.1667em\lower.7ex\hbox{E}\kern-.125emX}}
\markboth{\journalname, VOL. XX, NO. XX, XXXX 2017}
{Author \MakeLowercase{\textit{et al.}}: Preparation of Papers for IEEE Control Systems Letters (August 2022)}

\title{
Solution Sets for Inverse Infinite-Horizon Linear-Quadratic Descriptor Differential Games
}
 
 \author{Aaditya Kumar and Puduru Viswanadha Reddy, \IEEEmembership{Member, IEEE}
 	\thanks{Aaditya Kumar and P. V. Reddy are with the Department of Electrical Engineering, Indian Institute of Technology-Madras, Chennai, 600036, (e-mail: ee21d411@smail.iitm.ac.in, vishwa@ee.iitm.ac.in).}}

\maketitle
\thispagestyle{empty}

%%%%%%%%%%%%%%%%%%%%%%%%%%%%%%%%%%%%%%%%%%%%%%%%%%%%%%%%%%%%%%%%%%%%%%%%%%%%%%%%
\begin{abstract} 
In this letter, we study a model-based inverse problem for infinite-horizon linear-quadratic differential games with descriptor dynamics. Given an observed feedback strategy profile, we seek to identify all cost functions that rationalize it as a feedback Nash equilibrium; this collection is referred to as the solution set. We characterize the solution set, show that it is rectangular and convex, and provide an algorithm for computing an admissible realization whenever it is nonempty. \tb{We also show that, compared	with the corresponding inverse problem for standard state-space dynamics, descriptor dynamics modify the geometry of the solution set and may reduce identifiability.} Finally, we illustrate the results with numerical examples.
\end{abstract}

\begin{IEEEkeywords}
Game theory; inverse differential games; descriptor systems; feedback Nash equilibrium 
\end{IEEEkeywords}

\section{Introduction}
\label{sec:introduction} 
\tb{Dynamic game theory provides a rigorous framework for multi-agent strategic
 interactions evolving over time, with foundational treatments in
 \cite{Basar:1999,Engwerda:2005}. Advances in data-driven methods have spurred
 growing interest in  {inverse dynamic games} (IDGs), which seek to infer
 agents' objectives from observed behavior. IDGs have found application in
 human--machine shared control \cite{IngaECC2021,WuWang2024}, autonomous
 driving \cite{LeCleach2021,Armstrong2024}, biological motion
 \cite{Molloy2018Birds}, multi-agent robotics \cite{Mehr:23}, and electricity
 markets \cite{Risanger2020}.} 
 
\tb{Inverse problems in the single-agent setting have been studied extensively
under the inverse optimal control (IOC) and inverse reinforcement learning (IRL)
\cite{Kalman:1964,ab2020inverse} frameworks. In the multi-agent setting,
identification may be model-based (known dynamics) or model-free (dynamics and
objectives jointly estimated).
For finite-horizon games, optimality conditions from the Pontryagin maximum
principle yield identification schemes \cite{Molloy2017IFACP,Molloy2020TAC}.
For infinite-horizon linear--quadratic differential games (LQDGs) under
feedback information structures, IOC-based methods
\cite{Rothfuss2017IFAC}, solution-set characterizations via coupled algebraic
Riccati equations \cite{Inga2019CSL}, and a frequency-domain approach
\cite{Huang:2022} have been proposed. Model-based IRL and model-free
identification methods for this setting were later developed in
\cite{Martirosyan:2023,Martirosyan2024LCSS}.}
 
\tb{Most existing IDG formulations assume that agents' interactions are governed by differential or difference equations. However, many multi-agent systems are subject to instantaneous algebraic constraints arising from physical interconnections or conservation laws, as in power networks~\cite{Liu:20} and constrained robotic systems~\cite{Peng:20}, and therefore such systems   require modeling with  differential-algebraic equations (DAE). This distinction is important in inverse problems because modeling constraint-driven behavior with unconstrained dynamics, or misspecifying the interaction structure, may incorrectly attribute part of the observed behavior to strategic preferences rather than to physical constraints, yielding misinterpreted costs. A DAE-based IDG framework explicitly separates constraint-induced behavior from true strategic behavior, ensuring that the recovered objectives reflect the underlying interaction structure. While descriptor systems have been studied in forward linear--quadratic Nash games~\cite{Engwerda:12,Reddy:13,Tanwani:20}, inverse formulations for such systems have not been developed, to the best of our knowledge.} 

\tb{Motivated by this observation, we study a model-based inverse problem for infinite-horizon linear--quadratic descriptor differential games (LQDDGs) under a feedback information structure. The main contributions of this letter are threefold. First, we formulate the corresponding inverse dynamic game: given an observed stabilizing feedback strategy profile, we characterize all cost parameters for which it is a feedback Nash equilibrium (FBNE). To this end, we build on known conditions for FBNE in LQDDGs~\cite{Engwerda:12,Reddy:13} and on structural ideas from inverse LQDG formulations~\cite{Inga2019CSL}. Second, we characterize the associated solution set and show that it is convex and admits a Cartesian-product decomposition across players, as in~\cite{Inga2019CSL}. In particular, we show that descriptor dynamics modify the geometry of the inverse problem. As a result, the solution set may be enlarged, reducing identifiability relative to the standard inverse LQDG case~\cite{Inga2019CSL}. Third, we provide a convex optimization approach for computing an admissible realization whenever the solution set is nonempty.}
 
This letter is organized as follows. In Section \ref{sec:preliminaries}, we  provide preliminaries and inverse problem formulation. In Section \ref{sec:MainResults}, we characterize the solution set associated with the inverse problem and analyze its structure. In Section \ref{sec:Numerical}, we illustrate our approach for identification with numerical simulations, and finally Section \ref{sec:Conclusions} concludes.
 
 \subsubsection*{Notation}  Let $\mathbb{R}^n$ and $\mathbb{R}^{n\times m}$ denote the sets of real $n$-vectors and $n\times m$ real matrices, respectively; $\mathbb{S}^n$ the set of $n\times n$ real symmetric matrices; and $\mathbb{C}^-$ the open left half-plane. For a matrix $A$, $A^\top$, $\ker(A)$, $\mathrm{rank}(A)$, and $\sigma(A)$ denote its transpose, kernel, rank, and spectrum, respectively; $A$ is stable if $\sigma(A)\subset\mathbb{C}^-$. We write $\I_n$ for the $n\times n$ identity matrix and $\Z$ for the zero matrix of appropriate dimensions; for $A\in\mathbb{S}^n$, $A\succ0$ denotes positive definiteness. For an $N$-tuple $(A_1,\dots,A_N)$, we write $(A_i,A_{-i})$, where $A_{-i}:=(A_1,\dots,A_{i-1},A_{i+1},\dots,A_N)$. The Kronecker and Cartesian products are denoted by $\otimes$ and $\prod$, respectively. The operators $\col\{A_i\}_{i=1}^N$, $\row\{A_i\}_{i=1}^N$, and $\blkdiag\{A_i\}_{i=1}^N$ denote the block column, block row, and block diagonal matrices formed from $\{A_i\}_{i=1}^N$; we use $A:=(A_i,A_{-i})$ and $A:=\col\{A_i\}_{i=1}^N$ interchangeably where convenient. For $A\in\mathbb{R}^{n\times m}$, $\mathrm{vec}(A)\in\mathbb{R}^{nm}$ denotes the column-wise vectorization, while for $A\in\mathbb{S}^n$, $\vech(A)\in\mathbb{R}^{n(n+1)/2}$ denotes the half-vectorization obtained by stacking the lower-triangular entries column-wise. Moreover, for $A\in\mathbb{S}^n$, $\mathrm{vec}(A)=\mathcal D_n\vech(A)$, where $\mathcal D_n$ is the $n^2\times n(n+1)/2$ duplication matrix \cite[Chapter~3]{magnus2019matrix}. The Euclidean norm of $x\in\mathbb{R}^n$ is denoted by $\|x\|_2$.
\section{Preliminaries and Problem Formulation}
\label{sec:preliminaries}
In this section, we present background on linear--quadratic descriptor differential games (LQDDGs) and introduce the problem formulation.
\subsection{LQDDG and Feedback Nash Equilibrium}
We consider an $N$-player infinite-horizon nonzero-sum LQDDG. 
Let $\mathsf{N} := \{1,2,\ldots,N\}$ denote the set of players. 
At each time instant $t \in [0,\infty)$, every player $i \in \mathsf{N}$ chooses a control input 
$u_i(t) \in \mathbb{R}^{m_i}$ and influences the evolution of the state variable $x(t) \in \mathbb{R}^n$ according to the following linear differential--algebraic equation (DAE)
\begin{align}
	E\dot{x}(t) &= A x(t) + \sum_{i \in \mathsf{N}} B_i u_i(t),
	\qquad x(0) = x_0,
	\label{eq:state_eqn}
\end{align}
where $E \in \mathbb{R}^{n \times n}$ with $\operatorname{rank}(E)=r < n$, 
$A \in \mathbb{R}^{n \times n}$, 
$B_i \in \mathbb{R}^{n \times m_i}$, 
and $x_0 \in \mathbb{R}^n$.  
Each player $i \in \mathsf{N}$ seeks a control strategy $u_i$ that minimizes the quadratic cost functional
\begin{align}
	\tb{J_i(u_i,u_{-i};x_0)} 
	= \int_0^\infty 
	\Big(
	x^\top(t) Q_i x(t)
	+ \sum_{j \in \mathsf{N}} 
	u_j^\top(t) R_{ij} u_j(t)
	\Big) dt,
	\label{eq:cost_functional}
\end{align}
where $Q_i \in \mathbb{S}^n$ and $R_{ij} \in \mathbb{S}^{m_j}$ for $j\in \mathsf N$.

An initial state $x_0$ in \eqref{eq:state_eqn} is called consistent if the DAE admits a solution for that initial condition; see \cite{Kalogeropoulos:1998}. The DAE \eqref{eq:state_eqn}$,$ or equivalently the pair $(E,A)$, is called regular if $\det(\lambda E-A)\neq 0$ for some $\lambda\in\mathbb{C}$. It admits a unique solution for every consistent initial state if and only if $(E,A)$ is regular; see \cite{Brenan:96}. Moreover, if $(E,A)$ is regular, then by the Kronecker canonical form (KCF) \cite{Gantmacher:00}, there exist nonsingular matrices $X,Y\in\mathbb{R}^{n\times n}$ such that
\begin{align}
	Y^\top E X &=
	\begin{bmatrix}
		\I_r & \Z \\
		\Z & N
	\end{bmatrix},
	\qquad
	Y^\top A X =
	\begin{bmatrix}
		J & 0 \\
		0 & \I_{n-r}
	\end{bmatrix},
	\label{eq:KCF}
\end{align}
where $J$ is the $r\times r$ Jordan matrix associated with the finite eigenvalues of the pencil $\lambda E-A$, and $N$ is a nilpotent Jordan matrix of size $(n-r)\times(n-r)$ satisfying $N^\mu=0$ and $N^{\mu-1}\neq 0$. The integer $\mu$ is called the index of $(E,A)$. In general, the solution of an index-$\mu$ DAE depends on derivatives of the control input up to order $\mu-1$. To avoid differentiating the inputs, we restrict attention to index-1, or impulse-free, DAEs, as formalized in the next assumption.
\begin{assumption}\label{ass:index1} 
	The pair $(E,A)$ is regular and has index $1$. 
\end{assumption}

Using the KCF \eqref{eq:KCF}, define the coordinate transformation
\begin{align}
	\begin{bmatrix}x_1(t) \\ x_2(t)\end{bmatrix} := X^{-1}x(t),\label{eq:transformation}
\end{align}
where $x_1(t)\in\mathbb{R}^r$ and $x_2(t)\in\mathbb{R}^{n-r}$.
Partition $X$ as $X=[X_1~X_2]$, where $X_1\in\mathbb{R}^{n\times r}$ and $X_2\in\mathbb{R}^{n\times(n-r)}$.
Define $\bar B_{1i}:=[\I_r~0]Y^\top B_i\in \mathbb R^{r\times m_i}$, $\bar B_{2i}:=[0~\I_{n-r}]Y^\top B_i\in \mathbb R^{(n-r)\times m_i}$, and let $B:=\row\{B_i\}_{i=1}^N\in \mathbb R^{n\times m}$, where $m=\sum_{i\in\mathsf N} m_i$.
The DAE \eqref{eq:state_eqn} then decomposes into dynamic and algebraic parts as
\begin{subequations}
	\begin{align}
		x(t) &= X_1 x_1(t) + X_2 x_2(t), \label{eq:DAE-decompose}\\
		\dot{x}_1(t) &= J x_1(t) + \sum_{i\in\mathsf N} \bar B_{1i} u_i(t), \label{eq:DAE-DE1}\\
		x_2(t) &= -\sum_{i\in\mathsf N} \bar B_{2i} u_i(t), \quad
		\begin{bmatrix}x_1^\top(0)&x_2^\top(0)\end{bmatrix}^\top=X^{-1}x_0. \label{eq:DAE-DE2}
	\end{align}
	\label{eq:DAE-DE}
\end{subequations}
\tb{  
Hence, the descriptor system~\eqref{eq:state_eqn} has only $r$ differential degrees of freedom: $x_1$ evolves according to~\eqref{eq:DAE-DE1}, whereas $x_2$ is determined algebraically by the inputs through~\eqref{eq:DAE-DE2}.}

We assume that players use linear static state-feedback strategies, i.e.,
$u_i(t)=F_i x(t)$, where $F_i\in\mathbb{R}^{m_i\times n}$ for each $i\in\mathsf N$.
Let $u(t)=\col\{u_i(t)\}_{i=1}^N$ and $F:=\col\{F_i\}_{i=1}^N$.
The system~\eqref{eq:state_eqn} is said to be finite-dynamics stabilizable if there exists a feedback $u(t)=Fx(t)$ such that all finite eigenvalues of the closed-loop system
$E\dot{x}(t)=(A+BF)x(t)$ are stable.
This property holds if and only if $\operatorname{rank}[\lambda E-A\;\;B]=n$ for all $\lambda\in\mathbb C^+$; see~\cite{Engwerda:12}.
We therefore impose the following assumption.
\begin{assumption}\label{ass:stab}
The pairs $(J,\bar B_{1i})$ are stabilizable for all $i\in\mathsf N$, so that each player can individually stabilize the finite dynamics. Accordingly, we restrict our analysis to
	\begin{align}
		\mathsf F_s&:=\big\{F\in\mathbb R^{m\times n}\;\big|\;(E,A+BF)\text{ is regular, has index }1,\notag\\
		&\text{and all finite eigenvalues of }(E,A+BF)\text{ lie in }\mathbb C^-\big\}.
	\end{align}
\end{assumption}
\tb{Assumption~\ref{ass:stab} serves two purposes. First, it ensures that the closed-loop descriptor system is impulse-free. Second, it guarantees that the costs in~\eqref{eq:cost_functional} are finite; see also \cite{Engwerda:12,Reddy:13}.} 
We now introduce the notion of a feedback Nash equilibrium for the LQDDG \eqref{eq:state_eqn}-\eqref{eq:cost_functional}.

\begin{definition}
	A strategy profile $F^\star:=(F_i^\star,F_{-i}^\star)\in \mathsf F_s$ is a feedback Nash equilibrium (FBNE) for the LQDDG \eqref{eq:state_eqn}-\eqref{eq:cost_functional} if, for each $i\in \mathsf N$,
	\begin{align}
		\tb{J_i((F_i^\star,F_{-i}^\star);x_0)}
		\leq
		\tb{J_i((F_i,F_{-i}^\star);x_0)},
		\label{eq:FBNEcond}
	\end{align}
	for all $F_i\in \mathbb R^{m_i\times n}$ such that $(F_i,F_{-i}^\star)\in \mathsf F_s$, and for all consistent initial conditions $x_0$. 	
	We denote the set of all FBNE associated with the cost functions \eqref{eq:cost_functional} by
	\begin{align}
		\mathcal F(\{Q_i,R_{ij}\}_{i,j\in \mathsf N})
		:=
		\big\{F\in \mathsf F_s~|~ F \text{ satisfies } \eqref{eq:FBNEcond} \big\}.
		\label{eq:FBNEset}
	\end{align}
\end{definition}

\subsection{Problem formulation} 
The model-based inverse problem studied in this paper is formulated as follows.
\subsubsection{Problem}  \label{prob:prob1}
Assume that the system matrices $E$, $A$, and $B_i$, $i\in\mathsf N$, in~\eqref{eq:state_eqn} are known and satisfy Assumptions~\ref{ass:index1} and~\ref{ass:stab}. Let an observed feedback strategy profile $F=(F_i,F_{-i})\in\mathsf F_s$ be given. Under the quadratic cost structure~\eqref{eq:cost_functional}, characterize the set of cost parameters $\{Q_i,R_{ij}\}_{i,j\in\mathsf N}$, with $Q_i\in\mathbb S^n$ and $R_{ij}\in\mathbb R^{m_i\times m_j}$, for which $F$ is a feedback Nash equilibrium of the LQDDG. 
Equivalently, determine the inverse solution set
\begin{align}
\Theta(F):=
\left\{
\{Q_i,R_{ij}\}_{i,j\in N}
\;\middle|\;
F\in\mathcal{F} (\{Q_i,R_{ij}\}_{i,j\in N})
\right\}.
	\label{eq:solset1}
\end{align}
\begin{remark}\label{rem:feedbackform}
	The inverse problem \ref{prob:prob1} can also be formulated in terms of observed state and control trajectories $(x,u_i,u_{-i})$ rather than feedback matrices $(F_i,F_{-i})$, since the latter can be recovered by least squares; see~\cite{Inga2019CSL,Martirosyan:2023}.
\end{remark}
%%%%%%%%%%%%%%
%%%%%%%%%%%%%%
\section{Inverse LQDDG}
\label{sec:MainResults}
In this section, we first recall necessary and sufficient
conditions for the existence of a feedback Nash equilibrium
for the LQDDG. We then reformulate these conditions to
characterize the solution set \eqref{eq:solset1} of the inverse problem. 
\subsection{FBNE -- Necessary and sufficient conditions} 
We introduce notation used in the remainder of the paper.
Denote $\bar B_1:=\row\{\bar B_{1i}\}_{i=1}^N$, $\bar B_2:=\row\{\bar B_{2i}\}_{i=1}^N$, and $R_i:=\blkdiag\{R_{ij}\}_{j=1}^N$. 
Using \eqref{eq:DAE-DE2} in \eqref{eq:cost_functional}, the LQDDG \eqref{eq:state_eqn}--\eqref{eq:cost_functional} reduces to a standard LQDG with $r$-dimensional state dynamics:
\begin{subequations}
	\begin{align}
		\dot{x}_1(t)&=Jx_1(t)+\bar B_1 u(t), 
		\quad x_1(0)=\begin{bmatrix}\I_r&0\end{bmatrix} X^{-1}x_0, \label{eq:reddyn}
	\end{align}
	and the objective  of each player $i\in\mathsf N$ as
	\begin{align}
		\tb{J_i(u;x_0)}=
		\int_0^\infty 
		\begin{bmatrix} x_1^\top (t)&u^\top(t)\end{bmatrix}
		M_i
		\begin{bmatrix} x_1^\top(t)&u^\top(t) \end{bmatrix}^\top 
		dt, \label{eq:redobj}
	\end{align}
where
\begin{align} 
&M_i=	\begin{bmatrix} X_1&-X_2 \bar B_2 \\0&\I_{n-r}\end{bmatrix}^\top \begin{bmatrix}Q_i & 0\\0& R_i \end{bmatrix} 
\begin{bmatrix} X_1&-X_2 \bar B_2 \\0&\I_{n-r}\end{bmatrix}\notag 
\\
&\qquad \qquad =: \begin{bmatrix}
	\bar Q_{i} &   \bar V_{i1} &  \bar V_{i2} & \cdots &   \bar V_{iN}\\
	\bar V_{i1}^\top &   \bar R_{i1} &   \bar S_{i12} & \cdots &   \bar S_{i1N}\\
	\bar V_{i2}^\top &   \bar S_{i12}^\top &   \bar R_{i2} & \cdots &   \bar S_{i2N}\\
	\vdots & \vdots & \vdots & \ddots &\vdots\\
	\bar V_{iN}^\top &     \bar S_{i1N}^\top & \bar S_{i2N}^\top  & \cdots  &   \bar R_{iN} 
\end{bmatrix},\\
&\bar Q_i = X_1^\top Q_i X_1 \in \mathbb R^{r\times r},\\
&\bar R_{ij}=R_{ij}+\bar B_{2j}^\top X_2^\top Q_i X_2 \bar B_{2j}\in \mathbb R^{m_j \times m_j},~j\in \mathsf N,\\
&\bar V_{ij}=-X_1^\top Q_i X_2 \bar B_{2j}\in \mathbb R^{r\times m_j},~j\in \mathsf N,\\
&\bar S_{ijk}=\bar B_{2j}^\top X_2^\top Q_i X_2 \bar B_{2k} \in \mathbb R^{m_j \times m_k},\\
&\qquad \qquad k=j + 1,\cdots,N,~j=\mathsf N\backslash \{N\}. \notag 
\end{align}
\end{subequations}
 \tb{Under Assumptions \ref{ass:index1} and \ref{ass:stab}, the feedback class $\mathsf F_s$ is index preserving, i.e., $\text{index}(E,A)=\text{index}(E,A+BF)$ for any $F\in \mathsf F_s$. Properties of such feedback classes were studied in \cite{Reddy:13}. The next result shows how a full-state feedback law for \eqref{eq:state_eqn} induces a feedback law for the reduced system \eqref{eq:reddyn}. Related results were established in \cite[Theorems 3.6, 4.2--4.4]{Reddy:13} using matrix projector theory. Here, we recall the result most relevant to our setting and provide an alternative proof based on the Kronecker canonical form, which will be useful later in characterizing the solution sets of the inverse problem.}
 \begin{lemma}
 	\label{lem:highlowtransformation}
 	Let Assumptions \ref{ass:index1} and \ref{ass:stab} hold. Then there exists a map 
 	$\Omega:\mathsf F_s \subset \mathbb R^{m\times n}\rightarrow \mathbb R^{m\times r}$ such that for any $F\in \mathsf F_s$,
 	\begin{subequations} 
 		\begin{align}
 			u(t) &= Fx(t)=\Omega(F)x_1(t), \label{eq:infnonunique1}\\
 			\Omega(F) &:=F(\I_{n-r}+X_2\bar B_2 F)^{-1}X_1. \label{eq:infnonunique2}
 		\end{align}
 		Moreover, $\sigma(J+\bar B_1\Omega(F))\subset \mathbb C^-$ and coincides with the finite spectrum of the pair $(E,A+BF)$. 
 		Further, the preimage of any $\bar F\in \mathrm{Im}(\Omega)$ is given by
 		\begin{align}
 			\Omega^{-1}(\bar F)
 			&:=\big \{F\in \mathsf F_s ~\big|~FS=\bar F,\notag\\
 			&\qquad \text{where } S=(X_1-X_2\bar B_2 \bar F)\in \mathbb R^{n\times r}\big\}.
 			\label{eq:preimageOmega}
 		\end{align}
 	\end{subequations} 
 \end{lemma}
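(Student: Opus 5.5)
The plan is to work directly in the Kronecker coordinates \eqref{eq:KCF}--\eqref{eq:DAE-DE} and to read everything off the closed-loop pencil. Under Assumption~\ref{ass:index1}, $N=0$, so $Y^\top E X=\blkdiag\{\I_r,\Z\}$. (I read the identity in \eqref{eq:infnonunique2} as the $n\times n$ identity, since $X_2\bar B_2F\in\mathbb R^{n\times n}$.)

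\textbf{Step 1: index-1 closed loop and the formula for $\Omega(F)$.} For $F\in\mathsf F_s$, transform the closed-loop pencil:
\begin{align*}
Y^\top(A+BF)X=\begin{bmatrix} J+\bar B_1FX_1 & \bar B_1FX_2\\ \bar B_2FX_1 & \I_{n-r}+\bar B_2FX_2\end{bmatrix}.
\end{align*}
With $E$-block $\blkdiag\{\I_r,\Z\}$, the pair $(E,A+BF)$ is regular of index $1$ if and only if the lower-right block $\I_{n-r}+\bar B_2FX_2$ is nonsingular. This is the standard index-1 criterion for a semi-explicit DAE, and it holds for every $F\in\mathsf F_s$ by definition of $\mathsf F_s$.

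By Sylvester's determinant identity, the following three matrices are then nonsingular together:
\begin{align*}
\I_{n-r}+\bar B_2FX_2,\qquad \I_m+FX_2\bar B_2,\qquad \I_n+X_2\bar B_2F.
\end{align*}
Now substitute $u=Fx$ into \eqref{eq:DAE-decompose} and \eqref{eq:DAE-DE2}. This gives $u=FX_1x_1-FX_2\bar B_2u$, that is, $(\I_m+FX_2\bar B_2)u=FX_1x_1$. Solving and applying the push-through identity $(\I_m+FX_2\bar B_2)^{-1}F=F(\I_n+X_2\bar B_2F)^{-1}$ yields \eqref{eq:infnonunique1}--\eqref{eq:infnonunique2}.

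\textbf{Step 2: spectrum.} Because $\I_{n-r}+\bar B_2FX_2$ is invertible, the finite eigenvalues of the closed-loop pencil are the eigenvalues of the Schur complement
\begin{align*}
J+\bar B_1\big(FX_1-FX_2(\I_{n-r}+\bar B_2FX_2)^{-1}\bar B_2FX_1\big).
\end{align*}
To see this, eliminate $x_2$ from the algebraic row of the pencil. I then simplify the bracket:
\begin{align*}
FX_1-FX_2(\I+\bar B_2FX_2)^{-1}\bar B_2FX_1
&=\big(\I-FX_2\bar B_2(\I+FX_2\bar B_2)^{-1}\big)FX_1\\
&=(\I+FX_2\bar B_2)^{-1}FX_1=\Omega(F).
\end{align*}
Hence $\sigma(J+\bar B_1\Omega(F))$ coincides with the finite spectrum of $(E,A+BF)$, and it lies in $\mathbb C^-$ because $F\in\mathsf F_s$.

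\textbf{Step 3: preimage.} Fix $\bar F\in\mathrm{Im}(\Omega)$ and let $F\in\mathsf F_s$.
\begin{itemize}
\item If $\Omega(F)=\bar F$, then $(\I_m+FX_2\bar B_2)\bar F=FX_1$. Rearranging gives $F(X_1-X_2\bar B_2\bar F)=\bar F$, i.e.\ $FS=\bar F$.
\item Conversely, if $FS=\bar F$, then $(\I_m+FX_2\bar B_2)\bar F=FX_1$. Since $\I_m+FX_2\bar B_2$ is invertible for $F\in\mathsf F_s$ (Step~1), this gives $\bar F=\Omega(F)$.
\end{itemize}
This establishes \eqref{eq:preimageOmega}.

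The main obstacle is Step~1. One must justify that index $1$ of $(E,A+BF)$ is exactly the invertibility of $\I_{n-r}+\bar B_2FX_2$, since an index-preserving $F$ could a priori act through the Kronecker form differently. I would verify it by computing $\det(\lambda\,\blkdiag\{\I_r,\Z\}-\cdot)$ and its degree: the degree equals $r$, with no infinite eigenvalues of higher index, precisely when that block is nonsingular. Once this is settled, Steps~2 and~3 are routine algebra with the push-through identity.
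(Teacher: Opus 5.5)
Your proposal is correct and follows essentially the same route as the paper: Kronecker coordinates with $N=0$, eliminating the algebraic part via the push-through identity, the Schur determinant formula for the finite spectrum, and a direct rearrangement for the preimage. The differences are refinements: you justify invertibility of $\I_{n-r}+\bar B_2FX_2$ with your own leading-coefficient/degree argument instead of citing \cite{Engwerda:12}, and you prove both inclusions in \eqref{eq:preimageOmega}, whereas the paper only shows that every $F$ with $\Omega(F)=\bar F$ satisfies $FS=\bar F$.
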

 \begin{proof}
 	\tb{	Using $u(t)=Fx(t)$ in \eqref{eq:DAE-DE2} gives
 		\[
 		x_2(t)=-\bar B_2Fx(t)
 		=-\bar B_2FX_1x_1(t)-\bar B_2FX_2x_2(t).
 		\]
 		From \cite[Lemmas 7, 8]{Engwerda:12}, as the pair $(E,A+BF)$ has index 1, it follows that the matrix $(\I_{n-r}+\bar B_2FX_2)$ is invertible, and the following relation holds
 		$$(\I_{n-r}+\bar B_2FX_2)^{-1}\bar B_2 F=\bar B_2 F(\I_n+X_2\bar B_2F)^{-1}.$$
 		Using this, we get
 		$$ x_2(t)=-(\I_{n-r}+\bar B_2FX_2)^{-1}\bar B_2FX_1x_1(t).$$
 		Substituting into $u(t)=Fx(t)=F(X_1 x_1(t)+ X_2 x_2(t))$ yields
 		\begin{align*} 
 			u(t)	&=F\left(\I_n-X_2 (\I_{n-r}+\bar B_2 F X_2)^{-1}\bar B_2 F \right)X_1 x_1(t)\\
 			&=F\left(\I_n-X_2 \bar B_2F (\I_n+ X_2 \bar B_2 F )^{-1}  \right)X_1 x_1(t)\\
 			&=F(\I_n+X_2 \bar B_2 F)^{-1}X_1 x_1(t)=\Omega(F)x_1(t).
 		\end{align*} 
 		From the KCF of the index-$1$ regular pair $(E,A)$,
 		\[
 		Y^\top(\lambda E-(A+BF))X=
 		\begin{bmatrix}
 			\lambda \I-(J+\bar B_1FX_1) & -\bar B_1FX_2\\
 			-\bar B_2FX_1 & -(\I+\bar B_2FX_2)
 		\end{bmatrix}.
 		\]
 		Since $(I+\bar B_2FX_2)$ is invertible, Schur's determinant formula gives
 		$
 		\det(Y^\top(\lambda E-(A+BF))X)
 		=\det\!\left(\lambda \I_r-(J+\bar B_1\Omega(F))\right)$. 
 		Since $F\in\mathsf F_s$, the roots of $\det(\lambda E-(A+BF))=0$ lie in $\mathbb C^-$. Hence $J+\bar B_1\Omega(F)$ is stable.}
 	
 	\tb{Finally, for $\bar F\in\mathrm{Im}(\Omega)$ there exists $F\in\mathsf F_s$ such that 
 		$
 		\bar F = F(\I_n+X_2 \bar B_2 F)^{-1}X_1
 		=F(X_1-X_2 \bar B_2 F(\I_n+X_2 \bar B_2 F)^{-1}X_1)
 		=F(X_1 - X_2 \bar B_2 \bar F)=FS$.} 
 \end{proof}
%%%%%%%%%%%%%%%%%%%%%%%%%%%%%%%%%%%%%%%%
The next theorem gives necessary and sufficient conditions for the existence of an FBNE for the LQDDG; see \cite{Reddy:13,Engwerda:12}. 
\begin{theorem}{(\cite[Theorem 3]{Engwerda:12},\cite[Theorem 5.2]{Reddy:13})}
	\label{thm:nesscond}
	Assume $\bar R_{ii}\succ0$ for all $i\in \mathsf N$. 
	Then the strategy profile $F^\star:=\col\{ F_i^\star\}_{i\in \mathsf N}\in \mathsf F_s$ 
	is an FBNE for every consistent initial state if and only if
	\begin{subequations}
		\begin{align}
			&F^\star \in \Omega^{-1}(\bar F^\star), ~\bar F^\star \in \mathcal F_{\mathrm{red}},\label{eq:fullstatefb}\\ 
			&\mathcal F_\mathrm{red}:=\Big\{\bar F^\star \in \mathbb R^{m\times r}~|~\bar G \bar F^\star =  - (\bar V^\top+\bar B_d^\top \bar P)\Big\}, \label{eq:FBNEeq1}
		\end{align}
		where $\bar V=\row\{\bar V_{ii}\}_{i=1}^N$, $\bar B_d=\blkdiag\{\bar B_{1i}\}_{i=1}^N$, 
		$\bar G=[\bar G_{ij}]_{i,j\in\mathsf N}$ with
\begin{align} 
		\bar G_{ij}=
		\begin{cases}
			\bar R_{ij}, & i=j,\\
			\bar S_{iij}, & i\neq j,
		\end{cases}\label{eq:FBNEeq2} 
\end{align} 
		and $\bar P=\col\{\bar P_i\}_{i=1}^N$ are symmetric solutions of the coupled algebraic Riccati equations (CARE)
		\begin{align}
			\bar A_\mathrm{cl}^\top \bar P_i +\bar P_i \bar A_\mathrm{cl}
			+
			\begin{bmatrix} \I_r\\ \bar F^\star\end{bmatrix}^\top
			\bar M_i
			\begin{bmatrix} \I_r\\ \bar F^\star\end{bmatrix}
			=0,\quad i\in \mathsf N,
			\label{eq:CARE}
		\end{align}
		\label{eq:FBNENScond}%
			\end{subequations}
		satisfying $\sigma(\bar A_\mathrm{cl})\subset \mathbb C^-$, where $\bar A_\mathrm{cl}=J+\bar B_1\bar F^\star$. Further, the equilibrium cost of Player $i\in \mathsf N$ is given by $x_1^\top(0)\bar P_i x_1(0).$
	\end{theorem}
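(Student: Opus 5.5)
The plan is to transfer the problem to the reduced standard LQDG \eqref{eq:reddyn}--\eqref{eq:redobj} and then apply the classical verification and Riccati arguments for feedback Nash equilibria of infinite-horizon LQ games with cross terms. By Lemma~\ref{lem:highlowtransformation}, every $F\in\mathsf F_s$ induces the reduced feedback $\bar F=\Omega(F)$, with $u=\bar F x_1$ and $J+\bar B_1\bar F$ stable. The costs satisfy $J_i(F;x_0)=\bar J_i(\bar F;x_1(0))$, where $\bar J_i$ is the quadratic functional with weight $M_i$. Consistent initial states are in bijection with $x_1(0)\in\mathbb R^r$, because $x_2(0)$ is fixed algebraically by \eqref{eq:DAE-DE2}. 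Hence $F^\star$ is an FBNE of the LQDDG if and only if, for each $i$, $\bar F_i^\star$ minimizes $\bar J_i$ over the reduced feedbacks reachable by player-$i$ deviations with $F_{-i}^\star$ fixed, for all $x_1(0)$.

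The main obstacle is identifying this set of reachable deviations, because $\Omega$ is nonlinear and mixes players. The key observation is that a unilateral deviation $F_i$ leaves $u_{-i}=F_{-i}^\star x$ unchanged as a function of $x$, but the reduced strategies $\Omega(F)_{-i}$ change, since $x_2$ depends on $u_i$. So I would argue directly in the reduced system with $u_{-i}$ given as the fixed linear state map $F_{-i}^\star x$. Concretely, I would write $x=X_1x_1+X_2x_2$ and $x_2=-\bar B_{2i}u_i-\sum_{j\ne i}\bar B_{2j}F_j^\star x$. Solving this (index-1 invertibility from \cite{Engwerda:12}) gives player $i$ an equivalent single-agent LQ problem in $(x_1,u_i)$, where $u_i$ ranges over all stabilizing linear feedbacks of $x_1$.

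The next step is to show that player $i$'s best response, taken over this class, is characterized by a Riccati equation. I would use the completion-of-squares identity: for $\bar P_i$ symmetric solving \eqref{eq:CARE} with $\bar A_{\mathrm{cl}}$ stable, differentiate $x_1^\top\bar P_ix_1$ along the deviated trajectory and integrate over $[0,\infty)$, using stability to drop the boundary term. This gives
\[
\bar J_i=x_1^\top(0)\bar P_ix_1(0)+\int_0^\infty (u_i-\bar F_i^\star x_1)^\top \bar R_{ii}(u_i-\bar F_i^\star x_1)\,dt .
\]
This identity holds provided the first-order condition, the $i$-th block row of $\bar G\bar F^\star=-(\bar V^\top+\bar B_d^\top\bar P)$, is satisfied. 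The coupling terms $\bar S_{iij}$ enter through the cross terms between $u_i$ and $u_j=\bar F_j^\star x_1$ in $M_i$; I would check that block-row bookkeeping carefully. Since $\bar R_{ii}\succ0$, sufficiency follows, and the value is $x_1^\top(0)\bar P_ix_1(0)$. For necessity, I would define $\bar P_i$ as the Lyapunov solution of \eqref{eq:CARE}, which exists because $\bar A_{\mathrm{cl}}$ is stable. Then I would perturb $F_i^\star\to F_i^\star+\epsilon\Delta$, which stays in $\mathsf F_s$ for small $\epsilon$ since $\mathsf F_s$ is open, and require the first variation to vanish for all $x_1(0)$ and all $\Delta$. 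This forces the stationarity equation \eqref{eq:FBNEeq1}.

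Finally, I would collect the $N$ stationarity conditions into $\bar G\bar F^\star=-(\bar V^\top+\bar B_d^\top\bar P)$. Lemma~\ref{lem:highlowtransformation} then gives $F^\star\in\Omega^{-1}(\bar F^\star)$, which yields \eqref{eq:fullstatefb}.
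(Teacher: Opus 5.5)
\textbf{There is a genuine gap, and it sits exactly at the obstacle you flag.} You correctly note that a unilateral change of $F_i$ moves $x_2$, and hence the other players' reduced feedback. But the completion-of-squares identity and the first variation then put $u_j=\bar F_j^\star x_1$ along the deviated trajectory.

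Solving your own algebraic relation gives something else. With $\bar B_{k,-i}:=\row\{\bar B_{kj}\}_{j\neq i}$, it yields $u_{-i}=K_{-i}x_1+L_{-i}u_i$, where $L_{-i}=-(\I+F^\star_{-i}X_2\bar B_{2,-i})^{-1}F^\star_{-i}X_2\bar B_{2i}$ and $K_{-i}+L_{-i}\bar F_i^\star=\bar F^\star_{-i}$. So player $i$'s actual single-agent problem has:
\begin{itemize}
\item input matrix $\bar B_{1i}+\bar B_{1,-i}L_{-i}$;
\item a control weight obtained by compressing the (reordered) control block of $M_i$ with $[\I\ \ L_{-i}^\top]$, not $\bar R_{ii}$.
\end{itemize}
Set $H_i(x_1,u):=[x_1^\top\ u^\top]M_i[x_1^\top\ u^\top]^\top+2x_1^\top\bar P_i(Jx_1+\bar B_1u)$. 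The first variation then gives $\partial_{u_i}H_i+L_{-i}^\top\partial_{u_{-i}}H_i=0$ at $u=\bar F^\star x_1$. Only the first term corresponds to the $i$-th block row of \eqref{eq:FBNEeq1}. The second term is generically nonzero. Hence neither your necessity nor your sufficiency argument produces \eqref{eq:FBNEeq1}, and $\bar R_{ii}\succ0$ is not the relevant second-order condition.

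\textbf{This cannot be fixed by more careful bookkeeping.} For general elements of $\Omega^{-1}(\bar F^\star)$, the equivalence as stated fails under the paper's definition of FBNE, with costs compared as functions of $x_1(0)$ as you do. Here is a counterexample.
\begin{itemize}
\item Data: $E=\mathrm{diag}(1,0)$, $A=\mathrm{diag}(-1,1)$, $B_1=[1\ \ 1]^\top$, $B_2=[0\ \ 1]^\top$, $Q_1=\I_2$, $Q_2=\mathrm{diag}(0,1)$, $R_{11}=R_{22}=1$, $R_{12}=R_{21}=0$. Then $X=Y=\I_2$, $\dot x_1=-x_1+u_1$ and $x_2=-u_1-u_2$.
\item Conditions of the statement: $\bar R_{11}=\bar R_{22}=2$. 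The conditions \eqref{eq:FBNEeq1}--\eqref{eq:CARE} have the unique stabilizing solution $\bar F^\star=[-\tfrac{2}{7}\ \ \tfrac{1}{7}]^\top$, with $\bar P_1=\tfrac{3}{7}$. Moreover, $F^\star=\begin{bmatrix}-2/7&0\\0&1\end{bmatrix}\in\Omega^{-1}(\bar F^\star)$.
\item Profitable deviation: against $u_2=x_2$, every deviation of player~1 yields $x_2=u_2=-u_1/2$. So player~1 faces the LQR problem $\dot x_1=-x_1+u_1$ with running cost $x_1^2+\tfrac{5}{4}u_1^2$. Its value is $\hat P x_1(0)^2$ with $\hat P=\tfrac{5}{8}(6/\sqrt{5}-2)\approx0.427<\tfrac{3}{7}$. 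This value is attained by the admissible $F_1=[-\tfrac{4}{5}\hat P\ \ 0]$.
\end{itemize}
Hence $F^\star$ is not an FBNE. Conversely, $(F_1,F_2^\star)$ is an FBNE that violates \eqref{eq:FBNEeq1}, so both directions fail.

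\textbf{What your argument does prove.} It establishes the statement restricted to lifts with $F^\star_{-i}X_2\bar B_{2i}=0$ for all $i$, for example $F^\star=\bar F^\star[\I_r\ \ \Z]X^{-1}$. For such lifts $L_{-i}=0$ and $u_{-i}=\bar F^\star_{-i}x_1$ along every deviation, so your completion of squares and first-variation steps go through verbatim. For other lifts the $L_{-i}$ terms must be carried, and the resulting conditions depend on which element of the preimage is chosen.

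The paper gives no proof of its own here; it only cites \cite{Engwerda:12,Reddy:13}.
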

\begin{remark}
	\label{rem:infononuniqueness}
	For each $\bar F^\star \in \mathcal F_\mathrm{red}$, the associated full-state FBNE of the LQDDG are precisely the elements of the preimage $\Omega^{-1}(\bar F^\star)\subset \mathsf F_s$. By Lemma~\ref{lem:highlowtransformation}, distinct equilibria in $\Omega^{-1}(\bar F^\star)$ correspond to the same reduced feedback law $\bar F^\star$ and induce identical closed-loop behavior and equilibrium costs. Hence, the equilibria in $\Omega^{-1}(\bar F^\star)$ are referred to as informationally non-unique; see also~\cite{Engwerda:12,Reddy:13}.
\end{remark}
 \subsection{Solution to the inverse problem}  
 %%%
The following theorem gives the main result by characterizing the solution set~\eqref{eq:solset1} via a reformulation of~\eqref{eq:CARE} and~\eqref{eq:FBNEeq1}.
%%%
%%%
\begin{theorem} \label{thm:invthm1}  
 	\tb{Consider the data $E$, $A$, and $B_i$, $i\in\mathsf N$, specified in Problem~\ref{prob:prob1}, and let the observed strategy profile $F\in\mathsf F_s$ be a stabilizing feedback profile for the state dynamics~\eqref{eq:state_eqn}}.
 	For each player $i\in\mathsf N$, define the sets
 	\begin{subequations} 
 		\begin{align}
 			\Gamma^1_i 
 			&:=\Big\{\{Q_i,R_{ij}\}_{i,j\in\mathsf N}~\big|~   \mathcal M^i_Q\operatorname{vec}(Q_i)
 			+\sum_{j\in \mathsf N} \mathcal M^{ij}_R\operatorname{vec}(R_{ij})=0\Big\},
 			 			\label{eq:G1set} \\
 			\Gamma^2_i 
 			&:=\Big\{\{Q_i,R_{ij}\}_{i,j\in\mathsf N}~\big|~   R_{ii}+\bar B_{2i}^\top X_2^\top Q_i X_2 \bar B_{2i}\succ 0 \Big\}, 
 			\label{eq:G2set} 
 		\end{align}
 		where  $\mathcal M_Q^i\in \mathbb R^{rm_i\times n^2}$ and $\mathcal M_R^{ij}\in \mathbb R^{rm_i\times m_j^2}$ are given by
 		\begin{equation} 
 			\label{eq:terms}
 		\begin{aligned}
 			\mathcal M_Q^i&:=\mathcal N_Q^i-(\I_r\otimes \bar B_{1i}^\top)\mathcal K^{-1}\mathcal M_Q,\\
 			\mathcal M_R^{ij}&:=\begin{cases}  (\bar F_i^\top \otimes \I_{m_i})-(\I_r\otimes \bar B_{1i}^\top)\mathcal K^{-1}(\bar F_j^\top\otimes \bar F_j^\top),&i=j\\
 				-(\I_r\otimes \bar B_{1i}^\top)\mathcal K^{-1}(\bar F_j^\top\otimes \bar F_j^\top),&i\neq j
 			 \end{cases} \\
 		 \bar F&=\Omega(F)\in \mathbb R^{m\times r},\\
 		 	\mathcal K&:=(\I_r\otimes \bar A_\mathrm{cl}^\top)+( \bar A_\mathrm{cl}^\top\otimes \I_r)\in \mathbb R^{r^2\times r^2},\\
 		 \mathcal M_{Q}&:=X_1^\top \otimes X_1^\top-\bar F^\top \bar B_2^\top X_2^\top \otimes X_1^\top-X_1^\top \otimes \bar F^\top \bar B_2^\top X_2^\top\notag \\
 		 &\quad  +\bar F^\top \bar B_2^\top X_2^\top \otimes \bar F^\top \bar  B_2^\top X_2^\top\in \mathbb R^{r^2\times n^2},\\
 		 	\mathcal N_Q^i&:=\bar F^\top \bar B_2^\top X_2^\top \otimes \bar B_{2i}^\top X_2^\top -X_1^\top \otimes \bar B_{2i}^\top X_2^\top \in \mathbb R^{rm_i\times n^2}.
 		 \end{aligned} 
 	 \end{equation} 
 	\tb{Then, the solution  set~\eqref{eq:solset1} is given by}
 		\begin{align}
 			\Theta(F)
 			=
 			\prod_{i\in\mathsf N}(\Gamma_i^1\cap\Gamma_i^2).
 			\label{eq:Thetaset}
 		\end{align}
 	\end{subequations} 
 \end{theorem}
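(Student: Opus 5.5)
The plan is to start from the necessary and sufficient conditions of Theorem~\ref{thm:nesscond} and rewrite them, for the fixed observed $F$, as constraints on the unknowns $\{Q_i,R_{ij}\}$ that are linear except for the positivity requirement.

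First, set $\bar F:=\Omega(F)$. By Lemma~\ref{lem:highlowtransformation}, $F\in\Omega^{-1}(\bar F)$ holds automatically, and $\bar A_\mathrm{cl}=J+\bar B_1\bar F$ is stable. Hence $F$ is an FBNE for given costs if and only if three things hold:
\begin{itemize}
\item[(a)] $\bar R_{ii}\succ0$, which is exactly $\Gamma_i^2$;
\item[(b)] the CARE \eqref{eq:CARE} has symmetric solutions $\bar P_i$;
\item[(c)] $\bar F\in\mathcal F_\mathrm{red}$.
\end{itemize}
Because $\bar A_\mathrm{cl}$ is stable, $\sigma(\bar A_\mathrm{cl})\cap\sigma(-\bar A_\mathrm{cl})=\emptyset$. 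So each Lyapunov equation \eqref{eq:CARE} has a unique solution $\bar P_i$, which is symmetric because the right-hand side is symmetric. Vectorizing gives
\[
\operatorname{vec}(\bar P_i)=-\mathcal K^{-1}\operatorname{vec}\Big(\begin{bmatrix}\I_r\\ \bar F\end{bmatrix}^\top M_i\begin{bmatrix}\I_r\\ \bar F\end{bmatrix}\Big).
\]
Condition (b) therefore imposes no constraint, and $\bar P_i$ is a linear function of $(Q_i,R_{i1},\dots,R_{iN})$ alone.

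Next, I would compute the quadratic form. From the definition of $M_i$,
\[
\begin{bmatrix}\I_r\\ \bar F\end{bmatrix}^\top M_i\begin{bmatrix}\I_r\\ \bar F\end{bmatrix}=(X_1-X_2\bar B_2\bar F)^\top Q_i(X_1-X_2\bar B_2\bar F)+\sum_j\bar F_j^\top R_{ij}\bar F_j .
\]
This should be read with the block-diagonal structure of $R_i$ and with $\mathcal M$ interpreted as mapping $(x_1,u)\mapsto(x,u)$. Applying $\operatorname{vec}(ABC)=(C^\top\otimes A)\operatorname{vec}(B)$, the $Q_i$-term expands to $\mathcal M_Q\operatorname{vec}(Q_i)$ and the $R_{ij}$-term becomes $(\bar F_j^\top\otimes\bar F_j^\top)\operatorname{vec}(R_{ij})$. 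Hence $\operatorname{vec}(\bar P_i)=-\mathcal K^{-1}\big[\mathcal M_Q\operatorname{vec}(Q_i)+\sum_j(\bar F_j^\top\otimes\bar F_j^\top)\operatorname{vec}(R_{ij})\big]$.

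Then I would treat the equilibrium equation \eqref{eq:FBNEeq1} block-row by block-row. The $i$-th row reads
\[
\sum_j\bar G_{ij}\bar F_j+\bar V_{ii}^\top+\bar B_{1i}^\top\bar P_i=0,
\]
which involves only player $i$'s parameters. Substituting $\bar G_{ii}=R_{ii}+\bar B_{2i}^\top X_2^\top Q_iX_2\bar B_{2i}$, $\bar G_{ij}=\bar B_{2i}^\top X_2^\top Q_iX_2\bar B_{2j}$ and $\bar V_{ii}^\top=-\bar B_{2i}^\top X_2^\top Q_iX_1$, the $Q_i$-terms combine into $\bar B_{2i}^\top X_2^\top Q_i(X_2\bar B_2\bar F-X_1)$. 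Vectorizing this gives $\mathcal N_Q^i\operatorname{vec}(Q_i)$. The term $R_{ii}\bar F_i$ vectorizes to $(\bar F_i^\top\otimes\I_{m_i})\operatorname{vec}(R_{ii})$, and $\bar B_{1i}^\top\bar P_i$ vectorizes to $(\I_r\otimes\bar B_{1i}^\top)\operatorname{vec}(\bar P_i)$. Inserting the expression for $\operatorname{vec}(\bar P_i)$ reproduces exactly $\mathcal M_Q^i$ and $\mathcal M_R^{ij}$, so row $i$ is equivalent to membership in $\Gamma_i^1$.

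Finally, since each row condition and each positivity condition involves only player $i$'s parameters $(Q_i,R_{i\cdot})$, the intersection over $i$ is a Cartesian product, which gives \eqref{eq:Thetaset}. Both directions follow because every step above is an equivalence.

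The main obstacle I expect is the bookkeeping. One issue is the sign and ordering in the Kronecker identities, for example whether the $Q$-term in $\mathcal N_Q^i$ is $(X_2\bar B_2\bar F-X_1)^\top\otimes\bar B_{2i}^\top X_2^\top$. Another is reconciling the dimension mismatch in the displayed $M_i$ (the block $\I_{n-r}$ should be $\I_m$). A further subtlety is Theorem~\ref{thm:nesscond}'s standing hypothesis $\bar R_{ii}\succ0$: it must be absorbed into $\Gamma_i^2$, which requires arguing that the FBNE definition forces this convexity condition, or else adopting it as part of the admissible cost class. I would handle this by citing the necessity of $\bar R_{ii}\succ0$ for a minimum in the reduced LQ problem for player $i$.
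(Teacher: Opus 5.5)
Your main derivation is the paper's proof. You vectorize \eqref{eq:CARE}, invert $\mathcal K$ using stability of $\bar A_\mathrm{cl}=J+\bar B_1\Omega(F)$, substitute $\operatorname{vec}(\bar P_i)$ into the $i$-th block row of \eqref{eq:FBNEeq1}, and read off the product structure. Your Kronecker bookkeeping for $\mathcal M_Q$, $\mathcal N_Q^i$ and $\mathcal M_R^{ij}$ is correct; in particular, the $Q_i$-term in row $i$ is indeed $(X_2\bar B_2\bar F-X_1)^\top\otimes\bar B_{2i}^\top X_2^\top$. You are also right that the lower-right block of the transformation defining $M_i$ must be $\I_m$. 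Your remark that, for fixed $\bar F$, \eqref{eq:CARE} is a Lyapunov equation with a unique, automatically symmetric solution makes explicit what the paper leaves implicit.

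The step that would fail is your plan to absorb $\bar R_{ii}\succ0$ by proving that the FBNE property forces it. It does not. Take $Q_i=0$ and $R_{ij}=0$ for all $i,j\in\mathsf N$. Then every $J_i$ vanishes identically, so \eqref{eq:FBNEcond} holds with equality and this tuple belongs to $\Theta(F)$ as literally defined in \eqref{eq:solset1}. Yet $\bar R_{ii}=0$, so the tuple is not in $\Gamma_i^2$. More generally, zeroing one player's cost block makes that player's inequality trivial while violating $\Gamma_i^2$.

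Hence the inclusion $\Theta(F)\subseteq\prod_{i}(\Gamma_i^1\cap\Gamma_i^2)$ cannot come from a necessity argument. The equality \eqref{eq:Thetaset} holds only when $\Theta(F)$ is read within the admissible cost class $\bar R_{ii}\succ0$, i.e., under the standing hypothesis of Theorem~\ref{thm:nesscond}. The paper does this tacitly: its proof simply says that $\Gamma_i^2$ collects the cost parameters with $\bar R_{ii}\succ0$. This is the other branch of your own dichotomy. With that reading, every step of your argument is an equivalence and the proof goes through.
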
 
 \begin{proof}
 	Let $\{Q_i,R_{ij}\}_{i,j\in \mathsf N}$ be the cost parameters for which the given strategy profile $F\in \mathsf F_s$
 	is an FBNE. Then the necessary and sufficient conditions \eqref{eq:FBNENScond} stated in Theorem~\ref{thm:nesscond} hold. 
 	The CARE \eqref{eq:CARE} can be expanded as
 	\begin{align*} 
 		&  \bar A_\mathrm{cl}^\top \bar P_i +\bar P_i \bar A_\mathrm{cl}
 		+ X_1^\top Q_i X_1 - X_1^\top Q_i X_2 \bar B_2 \bar F \notag\\
 		& - \bar F^\top \bar B_2^\top X_2^\top Q_i X_1
 		+ \bar F^\top \bar B_2^\top X_2^\top Q_i X_2 \bar B_2 \bar F
 		+ \bar F^\top R_i \bar F =0 .
 	\end{align*}
 	Using the product formula~\cite{Brewer:1978} for vectorization,
 	$\mathrm{vec}(XYZ)=(Z^\top\otimes X)\mathrm{vec}(Y)$,
 	for matrices $X$, $Y$, and $Z$ of appropriate dimensions, and applying it term by term, we obtain
 	\begin{align}
 		\mathcal K \vecop(P_i)+\mathcal M_Q \vecop(Q_i)
 		+\sum_{j\in \mathsf N}(\bar F_j^\top\otimes \bar F_j^\top)\vecop(R_{ij}) =0 .
 		\label{eq:CAREcond}
 	\end{align}
 
 	Expanding condition \eqref{eq:FBNEeq1} for player $i\in \mathsf N$ yields
 	\begin{align*}
 		\sum_{j\in \mathsf N} \bar G_{ij}\bar F_j
 		= \bar B_{2i}^\top X_2^\top Q_i X_1 -\bar B_{1i}^\top P_i .
 	\end{align*}
 	Substituting for $\bar G_{ij}$ using \eqref{eq:FBNEeq2}, we obtain
 	\[
 	R_{ii}\bar F_i
 	+ \sum_{j\in \mathsf N} \bar B_{2i}^\top X_2^\top Q_i X_2 \bar B_{2j}\bar F_j
 	-\bar B_{2i}^\top X_2^\top Q_i X_1
 	+\bar B_{1i}^\top P_i =0 .
 	\]
 	Vectorizing the above expression gives
 	\begin{align}
 		&(\I_r\otimes \bar B_{1i}^\top)\vecop(P_i)
 		+\mathcal N^i_{Q}\vecop(Q_i) %\notag\\&\qquad 
 		+(\bar F_i^\top\otimes \I_{m_i})\vecop(R_{ii})=0 .
 		\label{eq:FBNEinvcond}
 	\end{align}
 	Since $\bar P$ is a stabilizing solution of the CARE \eqref{eq:CARE}, the matrix $\bar A_\mathrm{cl}$ is stable. 
 	Hence, $\bar A_\mathrm{cl}$ and $-\bar A_\mathrm{cl}$ have no eigenvalues in common, implying that the matrix
 	$
 	\mathcal K=(\I_r\otimes\bar A_\mathrm{cl}^\top)+(\bar A_\mathrm{cl}\otimes \I_r)$
 	is invertible. Using this together with \eqref{eq:CAREcond} and \eqref{eq:FBNEinvcond}, we obtain the set \eqref{eq:G1set}, which characterizes all cost parameters satisfying \eqref{eq:FBNEeq1} and \eqref{eq:CARE}. 
 	Further, the set \eqref{eq:G2set} collects all cost parameters satisfying $\bar R_{ii}\succ 0$ for all $i\in \mathsf N$. 
 	Consequently, the set \eqref{eq:Thetaset} characterizes the cost parameters for which the observed strategy profile $F\in \mathsf F_s$ is an FBNE.
 \end{proof}
%%%%%%%%%%%%%%%%%%%%%%%%%%%%%% 
%%%%%%%%%%%%%%%%%%%%%%%%%%%%%%
Being a Cartesian product, the solution set~\eqref{eq:Thetaset} is rectangular, which allows the cost parameters to be identified in a distributed manner. Since the cost parameters are symmetric, we use the half-vectorization operator and define the per-player cost parameters as
%%%%
%%%%
%%%%
\begin{align*} 
	\theta_i :=[\vech(Q_i)^\top \vech(\col\{R_{ij}\}_{j\in \mathsf N})^\top]^\top \in \mathbb R^{L},
\end{align*}
where $L=\tfrac{1}{2}\big( n(n+1) +\sum_{j=1}^N m_j (m_j+1) \big)$. 
Using this, \eqref{eq:G1set} and \eqref{eq:G2set} are equivalently represented by
\begin{subequations} 
	\begin{align}
		\Gamma_i^1 
		&:= \big\{ \theta_i \in \mathbb R^{L} \,\mid\, \mathcal M_i \theta_i = 0 \big\}, \label{eq:Gamma_a}
		\\
		\Gamma_i^2 
		&:=\big\{ \theta_i \in \mathbb R^{L} \,\mid\,
		R_{ii}+\bar B_{2i}^\top X_2^\top Q_i X_2 \bar B_{2i}\succ 0 \big\},
		\label{eq:Gamma_b}\\
		\mathcal M_i
		&:= \begin{bmatrix} 
			\mathcal M_Q^i \mathcal D_n &
			\row\{\mathcal M_R^{ij} \mathcal D_{m_j}\}_{j=1}^N 
		\end{bmatrix} \in \mathbb R^{rm_i\times L}. \label{eq:Gamma_c}
	\end{align}
\end{subequations} 
\begin{corollary}
	\label{cor:QP}
	For each $i\in \mathsf N$, consider the optimization problem
	\begin{align}
		\min_{\theta_i \in \Gamma_i^2} f_i(\theta_i),\quad 
		~ f_i(\theta_i):=\tfrac{1}{2}\|\mathcal M_i \theta_i\|_2^2 .
		\label{eq:opt1prob}
	\end{align}
	Then, $
	\arg\min_{\theta_i\in\Gamma_i^2} f_i(\theta_i)
	=
	\Gamma_i^1\cap \Gamma_i^2$.
	In particular, the solution set of the inverse problem~\eqref{eq:solset1} is non-empty if and only if the optimal value of \eqref{eq:opt1prob} equals zero for every $i\in\mathsf N$.
\end{corollary}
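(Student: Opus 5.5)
The argument rests on the fact that $f_i$ is a nonnegative function whose zero set, intersected with $\Gamma_i^2$, is exactly $\Gamma_i^1\cap\Gamma_i^2$ by \eqref{eq:Gamma_a}. The catch is that the minimum of $f_i$ over $\Gamma_i^2$ need not be attained in general, and if $\Gamma_i^1\cap\Gamma_i^2=\emptyset$ the argmin may even be empty. The statement has to be read with this in mind, and I will treat the two cases separately.

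\emph{Case 1: $\Gamma_i^1\cap\Gamma_i^2\neq\emptyset$.} Since $f_i(\theta_i)=\tfrac12\|\mathcal M_i\theta_i\|_2^2\ge 0$ for every $\theta_i$, any $\theta_i\in\Gamma_i^1\cap\Gamma_i^2$ gives $f_i(\theta_i)=0$. It is therefore feasible and attains the global lower bound, so it lies in the argmin. Conversely, the optimal value is then $0$, so any minimizer $\theta_i\in\Gamma_i^2$ has $\|\mathcal M_i\theta_i\|_2=0$, i.e.\ $\mathcal M_i\theta_i=0$, i.e.\ $\theta_i\in\Gamma_i^1$. This proves the set equality in this case.

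\emph{Case 2: $\Gamma_i^1\cap\Gamma_i^2=\emptyset$.} I claim no minimizer can have value $0$, since such a point would lie in $\Gamma_i^1\cap\Gamma_i^2$. Hence $\arg\min$ consists only of points with positive value. The main obstacle is to show that the argmin is then also empty, or else to interpret the statement through the optimal value.

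For the optimal value I use homogeneity. Both $\Gamma_i^1$ and $\Gamma_i^2$ are cones: $\Gamma_i^1$ is a subspace, and $\Gamma_i^2$ is defined by a strict LMI that is linear in $\theta_i$, so it is an open convex cone. In particular $\theta_i\in\Gamma_i^2$ implies $t\theta_i\in\Gamma_i^2$ for all $t>0$, and $f_i(t\theta_i)=t^2f_i(\theta_i)\to0$ as $t\to0^+$. Hence the infimum over $\Gamma_i^2$ is always $0$ whenever $\Gamma_i^2\neq\emptyset$. In Case 2 it is therefore not attained, so the argmin is empty and the equality $\arg\min=\emptyset=\Gamma_i^1\cap\Gamma_i^2$ still holds. (If $\Gamma_i^2=\emptyset$, both sides are trivially empty.)

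This gives the set identity in both cases. The "in particular" claim follows by combining it with Theorem~\ref{thm:invthm1}: $\Theta(F)=\prod_i(\Gamma_i^1\cap\Gamma_i^2)$ is nonempty iff every factor is nonempty, iff every problem \eqref{eq:opt1prob} attains the value $0$. The phrase ``optimal value equals zero'' must be read as ``zero is attained'', because by the scaling argument the infimum is always zero. I would state this reading explicitly in the proof.
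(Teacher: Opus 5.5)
Your proof is correct, and its core coincides with the paper's. The paper argues only that $f_i\ge 0$ vanishes exactly on $\Gamma_i^1=\ker(\mathcal M_i)$, concludes that minimizing over $\Gamma_i^2$ ``yields'' $\Gamma_i^1\cap\Gamma_i^2$, and then invokes the product structure \eqref{eq:Thetaset}. That one-line argument is complete only when $\Gamma_i^1\cap\Gamma_i^2\neq\emptyset$. In the complementary case it does not rule out a minimizer with positive value. Your observation supplies the missing step: $\Gamma_i^2$ is an open convex cone and $f_i(t\theta_i)=t^2f_i(\theta_i)$, so the argmin is then empty.

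Your reading of the ``in particular'' clause is also necessary rather than pedantic. $\Gamma_i^2$ is never empty (take $Q_i=\Z$, $R_{ii}=\I_{m_i}$), so the infimum of \eqref{eq:opt1prob} is always $0$, and your parenthetical case $\Gamma_i^2=\emptyset$ can be dropped. By contrast, $\Theta(F)$ can be empty. For instance, if $\bar B_2=\Z$ and $\bar B_{1i}=\I_r$, then \eqref{eq:FBNEinvcond} reduces to $P_i=-R_{ii}\bar F_i$ with $P_i$ symmetric. This is impossible for $R_{ii}\succ 0$ when $\bar F_i$ has non-real eigenvalues, since $R_{ii}\bar F_i$ symmetric would make $\bar F_i$ similar to a symmetric matrix.

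So the equivalence holds only if ``optimal value equals zero'' means ``the value $0$ is attained,'' and you should state this explicitly as you propose. The same scaling also shows that, numerically, \eqref{eq:opt1prob} needs some normalization of $\theta_i$ to act as the certificate described in Remark~\ref{eq:QPalg}.
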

\begin{proof}
	Observe that $f_i(\theta_i)=0$ if and only if $\mathcal M_i\theta_i=0$, which is equivalent to $\theta_i\in\Gamma_i^1$. 
	Hence, minimizing $f_i$ over $\Gamma_i^2$ yields   the set $\Gamma_i^1\cap\Gamma_i^2$.
	The non-emptiness of the solution set  \eqref{eq:solset1} follows from the Cartesian product structure in \eqref{eq:Thetaset}.
\end{proof}
\begin{remark} \label{eq:QPalg}
A consequence of Corollary~\ref{cor:QP} is that the non-emptiness of~\eqref{eq:solset1} can be verified by solving the convex quadratic program~\eqref{eq:opt1prob}, as in inverse LQDGs~\cite{Inga2019CSL}. An optimal value of zero certifies that the solution set~\eqref{eq:Thetaset} is nonempty. \tb{Thus, $\|\mathcal M_i\theta_i\|_2$ measures identification error for player~$i$, and exact identification is achieved when $\|\mathcal M_i\theta_i\|_2=0$ for all $i\in\mathsf N$.}
\end{remark}
%%%%%%
\begin{remark}\label{rem:scaling} For every $\theta_i^\star \in \Gamma_i^1\cap \Gamma_i^2$, we have $\kappa_i \theta_i^\star \in \Gamma_i^1\cap \Gamma_i^2$ for any $\kappa_i>0$. 
	Hence, infinitely many cost parameters can rationalize the observed strategies as an FBNE, reflecting the ill-posedness of inverse differential game problems, similar to inverse LQDG \cite[Corollary 1]{Inga2019CSL}.
\end{remark}
 
\subsection{Comparison with inverse LQDG \cite{Inga2019CSL}}
In this subsection, we highlight key differences between the solution set \eqref{eq:Thetaset}
and that of inverse LQDG \cite{Inga2019CSL} (i.e., when $E$ is nonsingular).
These differences appear in three aspects.

\subsubsection{Cost parameters} 
In inverse LQDG \cite{Inga2019CSL}, the cost parameters $R_{ii}$, $i\in \mathsf N$, are required to be positive definite. In contrast, inverse LQDDG imposes the definiteness condition jointly on the state and control weighting matrices, as specified in \eqref{eq:G2set}.

\subsubsection{Dependence on $\mathrm{rank}(E)$}   
\tb{The condition $\mathrm{rank}(E)=r$ determines the dimension of the constraint manifold induced by the transformation~\eqref{eq:transformation}, on which the reduced state evolves according to~\eqref{eq:reddyn}. For inverse LQDGs~\cite{Inga2019CSL}, the dimension of the per-player solution set~\eqref{eq:G1set} has lower bound $L-nm_i$. For inverse LQDDGs, this lower bound increases as the dimension of the constraint manifold decreases, as shown next.
\begin{corollary}\label{cor:rank}
	Let $\mathrm{rank}(E)=r<n$. Then
	\begin{align}
		\dim(\Gamma_i^1) \ge L-rm_i > L-nm_i > 0 .
	\end{align}
\end{corollary}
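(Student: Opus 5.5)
The plan is a dimension count via rank–nullity applied to the representation \eqref{eq:Gamma_a}. By \eqref{eq:Gamma_a}, $\Gamma_i^1=\ker(\mathcal M_i)$ with $\mathcal M_i\in\mathbb R^{rm_i\times L}$ as in \eqref{eq:Gamma_c}. Rank–nullity then gives
\[
\dim(\Gamma_i^1)=L-\mathrm{rank}(\mathcal M_i)\ \ge\ L-rm_i,
\]
because the rank of a matrix is at most its number of rows, which here is $rm_i$. The row count is $rm_i$ because $\mathcal M_i$ comes from vectorizing the $m_i\times r$ matrix equation for player $i$: substitute $\vecop(P_i)=-\mathcal K^{-1}(\mathcal M_Q\vecop(Q_i)+\sum_j(\bar F_j^\top\otimes\bar F_j^\top)\vecop(R_{ij}))$ from \eqref{eq:CAREcond} into \eqref{eq:FBNEinvcond}. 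So the first inequality follows directly from the dimensions established in Theorem~\ref{thm:invthm1}.

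The middle strict inequality $L-rm_i>L-nm_i$ is equivalent to $(n-r)m_i>0$. This holds because $r<n$ by hypothesis and $m_i\ge1$; each player has a nontrivial input, otherwise $B_i$ and $\bar B_{1i}$ are vacuous.

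The main (though mild) obstacle is the final inequality $L-nm_i>0$. Recall
\[
L=\tfrac12\Big(n(n+1)+\sum_{j} m_j(m_j+1)\Big)\ \ge\ \tfrac12 n(n+1)+\tfrac12 m_i(m_i+1).
\]
The intended argument is to show $\tfrac12 n(n+1)+\tfrac12 m_i(m_i+1)-nm_i>0$. Multiplying by $2$, this becomes
\[
n^2+n+m_i^2+m_i-2nm_i=(n-m_i)^2+n+m_i>0,
\]
which clearly holds since $n,m_i\ge1$. I would present this completing-the-square computation explicitly, noting that the remaining players' terms $m_j(m_j+1)/2$ are nonnegative and only strengthen the bound. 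Chaining the three inequalities yields the corollary.

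A possible subtlety is whether $\mathcal M_i$ should be viewed as acting on half-vectorized parameters. This does not affect the argument, since the row count $rm_i$ is unchanged by right-multiplication with the duplication matrices $\mathcal D_n,\mathcal D_{m_j}$.
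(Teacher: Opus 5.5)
Your proof is correct and follows the same route as the paper: rank--nullity applied to $\mathcal M_i\in\mathbb R^{rm_i\times L}$ gives $\dim(\Gamma_i^1)\ge L-rm_i$, and the middle inequality reduces to $(n-r)m_i>0$ (the paper likewise implicitly takes $m_i\ge 1$). For $L-nm_i>0$ you complete the square to get $\tfrac12\big((n-m_i)^2+n+m_i\big)$ plus the nonnegative terms $\tfrac12 m_j(m_j+1)$, $j\neq i$, which is exactly the paper's identity for $L-nm_i$.
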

\begin{proof}
	From the rank–nullity theorem we have $
	\dim(\Gamma_i^1)
	=\dim(\ker(\mathcal M_i))
	= L-\mathrm{rank}(\mathcal M_i)
	\ge L-rm_i$. 
	Since $r<n$, it follows that $L-rm_i > L-nm_i$. Moreover, $
	L-nm_i
	=\tfrac{1}{2}(n+m_i)+\tfrac{1}{2}\sum_{j\neq i}m_j(m_j+1)
	+\tfrac{1}{2}(n-m_i)^2>0$.
\end{proof}} 
\tb{Corollary~\ref{cor:rank} shows that, in the descriptor setting, \(\Gamma_i^1=\ker(\mathcal M_i)\) has a larger lower bound on its dimension than in the inverse LQDG case \cite{Inga2019CSL}. Thus, the descriptor structure introduces additional degrees of freedom in the inverse map from observed equilibrium behavior to admissible cost parameters, allowing more cost realizations to rationalize the same observed FBNE. This reflects reduced identifiability induced by the algebraic constraints in the descriptor dynamics.}
\subsubsection{Forward problem with identified costs}   \label{sec:forwardproblem} 
\tb{ 
It is well known that a differential game may admit multiple FBNE due to the non-uniqueness of Nash equilibria \cite{Basar:1999,Engwerda:2005}. Consequently, identified costs may rationalize behaviors other than the one induced by the observed feedback strategy profile. In inverse LQDGs \cite{Inga2019CSL}, the number of rationalized behaviors equals the number of stabilizing solutions of the associated CARE \cite{Engwerda:2005}. In inverse LQDDGs, however, the interpretation is more subtle because multiple FBNE may correspond to the same reduced feedback and thus to the same closed-loop behavior. In particular,
\begin{align*}
	\mathcal F(\{Q_i,R_{ij}\}_{i,j\in\mathsf N})
	=
	\bigcup_{\bar F^\star \in \mathcal F_\mathrm{red}}
	\Omega^{-1}(\bar F^\star),
\end{align*}
where $\mathcal F_\mathrm{red}$ is obtained from the stabilizing solutions of \eqref{eq:CARE} via \eqref{eq:FBNEeq1}. Each pre-image $\Omega^{-1}(\bar F^\star)$, $\bar F^\star\in\mathcal F_\mathrm{red}$, may contain uncountably many FBNE, all inducing the same closed-loop behavior. Hence, the identified costs may rationalize uncountably many FBNE but only $|\mathcal F_\mathrm{red}|$ distinct behaviors.}

%%%%%%%%%%%%%
\section{Numerical illustration} 
\label{sec:Numerical} 
\tb{We consider a two-player shared-steering lane-keeping example in which a human driver~($h$) and an automation unit~($a$) simultaneously apply steering torques through the same steering interface. This example illustrates inverse-game identification in shared-steering control; see \cite{IngaECC2021,LinWu2024}. Let $x=[e_y,\;e_\psi,\;\delta]^\top\in\mathbb{R}^3$ denote the lateral lane error~(m), heading error~(rad), and steering angle~(rad), with scalar inputs $u_h$ and $u_a$~(Nm). The descriptor structure follows from the kinematic lane-error dynamics $\dot e_y=v_x e_\psi$, $\dot e_\psi=(v_x/L)\delta$, and the quasi-static steering constraint $0=-K_s\delta+u_h+u_a$, where $L>0$ and $K_s>0$, yielding~\eqref{eq:state_eqn} with
\begin{align*}
	E &= \mathrm{diag}(1,1,0), \quad
	A = \begin{mymatrix}
		0 & v_x & 0\\
		0 & 0 & v_x/L\\
		0 & 0 & -K_s
	\end{mymatrix}, \quad
	B_h=B_a=\begin{mymatrix}0\\0\\1\end{mymatrix}.
\end{align*}
The DAE is index-1, with algebraic constraint $\delta=(u_h+u_a)/K_s$. Each player $i\in\{h,a\}$ minimizes
\begin{equation*}
	J_i=\int_0^\infty \!\big(
	x^\top Q_i x+\sum_{j\in\{h,a\}} R_{ij}u_j^2
	\big)\,dt,
\end{equation*}
where $Q_i=Q_i^\top$ and $R_{ij}\in\mathbb{R}$.
We simulate with $v_x=20$~m/s, $L=2.7$~m, and $K_s=10$~Nm/rad. The ground-truth costs are
\begin{align*} 
&Q_h^{\mathrm{gt}} = \mathrm{diag}(1.0,\;0.5,\;0.1), ~Q_a^{\mathrm{gt}} = \mathrm{diag}(3.0,\;2.0,\;0.1),\\
&R_{hh}^{\mathrm{gt}} = 2.0,~ R_{ha}^{\mathrm{gt}} = 0.5,~ R_{ah}^{\mathrm{gt}} = 1.0,~
R_{aa}^{\mathrm{gt}} = 0.5.
\end{align*} 
Thus, the automation penalizes lane-keeping and heading errors more strongly than the human, whereas the human is more effort-averse. The CARE~\eqref{eq:CARE} admits a unique symmetric stabilizing solution with reduced feedback matrix
$
\bar{F}_{\mathrm{gt}}=
\begin{mymatrix}
	-1.9995 & -0.3547\\
	-9.5252 & -2.1631
\end{mymatrix},
$
and the corresponding FBNE trajectories are shown in Fig.~\ref{fig:figa}. Treating these as the observed   state and control trajectories, least-squares fitting (Remark~\ref{rem:feedbackform}) yields the observed feedback profile
$
F_{\mathrm{gt}}=
\begin{mymatrix}
	-0.0046 & -0.3971 & 0.7987\\
	-0.4779 & -1.8117 & 3.8449
\end{mymatrix}
\in \Omega^{-1}(\bar F_\mathrm{gt}),
$
which is used for cost identification in the inverse problem. We use CVX~\cite{CVX} to solve the convex optimization problem~\eqref{eq:opt1prob}.}
%%%%%%%%%%%%%%%%%%%%%%%%%%%%%%%%%%%%%%%%%%%%%%%%

%%%%%%%%%%%%%%%%%%%%%%%%%%%%%%%%%%%%%%%%%%%%%%%%
\subsubsection*{Identification}
\tb{Solving the inverse game~\eqref{eq:opt1prob} with ${F}_{\mathrm{gt}}$ as the observed FBNE yields one identified cost tuple
\begin{align*}
	Q_h^{\mathrm{id}}&={\footnotesize\begin{bmatrix}
			0.106 & 0.017 & 0.050\\
			0.017 & -0.328 & 1.810\\
			0.050 & 1.810 & -0.621
	\end{bmatrix}},
	Q_a^{\mathrm{id}}={\footnotesize\begin{bmatrix}
			1.378 & 1.764 & -0.499\\
			1.764 & 0.985 & 0.035\\
			-0.499 & 0.035 & -0.752
	\end{bmatrix}}\\
	R_{hh}^{\mathrm{id}}&=0.564,\quad
	R^{\mathrm{id}}_{ha}=0.152,\quad
	R_{ah}^{\mathrm{id}}=-0.709,\quad
	R^{\mathrm{id}}_{aa}=0.231.
\end{align*}
The forward descriptor game with these costs admits two stabilizing solutions of the CARE~\eqref{eq:CARE}, with reduced feedback set
$
\mathcal F^{\mathrm{id}}_\mathrm{red}=
\left\{\bar F^{(1)}_{\mathrm{id}}, \bar F^{(2)}_{\mathrm{id}}\right\}$,
where
$
\bar F^{(1)}_{\mathrm{id}}=
\begin{mymatrix}
	-7.219 & -0.3547\\
	2.5253 & -2.1631
\end{mymatrix}$,
$\bar F^{(2)}_{\mathrm{id}}=
\begin{mymatrix}
	-1.9995 & -0.3547\\
	-9.5252 & -2.1631
\end{mymatrix}$,
and the FBNE are given by
$$
\Omega^{-1}(\bar F^{(k)}_\mathrm{id})
=
\left\{
\begin{bmatrix}
	ax_1+b & c_kx_1+d_k & x_1\\
	ax_2+e & c_kx_2+f_k & x_2
\end{bmatrix}
:\;
{\small
	\begin{array}{l}
		x_1,x_2\in\mathbb R,\\
		x_1+x_2\neq 10
\end{array}}
\right\},
$$
where $a=0.4383$, $b=-0.3547$, $e=-2.163$, $c_1=0.8171$, $c_2=2.006$, $d_1=-7.219$, $d_2=-1.999$, $f_1=2.525$, and $f_2=-9.525$.
Thus, the identified costs rationalize two distinct behaviors, as discussed in Subsection~\ref{sec:forwardproblem}.
In particular, all informationally non-unique FBNE in $\Omega^{-1}(\bar F^{(2)}_{\mathrm{id}})$ reproduce the same behavior as the observed equilibrium since $\bar F_\mathrm{id}^{(2)}=\bar F_{\mathrm{gt}}$ (see Fig.~\ref{fig:figa}), whereas the FBNE in $\Omega^{-1}(\bar F^{(1)}_{\mathrm{id}})$ induce behavior different from the observed one (see Fig.~\ref{fig:figb}).}
%%%%%%%%%%%%%%%%%%%%%%%%%%%%%%%%%%%%
%%%%%%%%%%%%%%%%%%%%%%%%%%%%%%%%%%%%
\subsubsection*{Identification with  diagonal cost matrices} 
\tb{We solve~\eqref{eq:opt1prob} under a diagonal structural constraint, obtaining
$Q_h^{\mathrm{diag}}=\mathrm{diag}(0.308,~0.801,~-0.991)$,
$R_{hh}^{\mathrm{diag}}=0.967$, $R_{ha}^{\mathrm{diag}}=0.237$,
$Q_a^{\mathrm{diag}}=\mathrm{diag}(0.152,~-1.355,~0.801)$,
$R_{ah}^{\mathrm{diag}}=0.340$, and $R_{aa}^{\mathrm{diag}}=0.007$.
The forward game admits a unique stabilizing solution to the CARE~\eqref{eq:CARE} whose reduced feedback matrix \eqref{eq:FBNEeq1} matches $\bar{F}_\mathrm{gt}$. Hence, these costs lie in $\Theta(F_\mathrm{gt})$ and rationalize the observed strategies as an FBNE. Although structurally similar to the ground-truth costs, they are not positive scalings of the true parameters (see Remark~\ref{rem:scaling}). Thus, structural constraints shrink the solution set but do not ensure recovery of the ground truth, illustrating the ill-posedness of the inverse problem.}
\begin{figure}[h]  	\centering
	\subfloat[\tb{Observed; reproduced by $\Omega^{-1}(\bar F^{(2)}_{\mathrm{id}})$}] {%
		{\includegraphics[scale=.94]{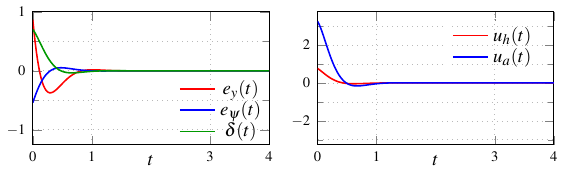}} \label{fig:figa}}\\
	\subfloat[\tb{generated by $\Omega^{-1}(\bar F^{(1)}_{\mathrm{id}})$}]{%
		{\includegraphics[scale=.94]{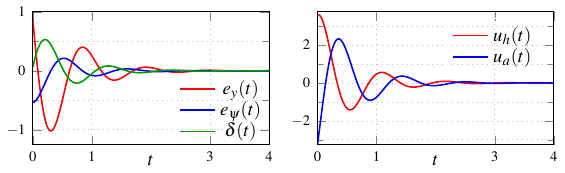}} \label{fig:figb}}\\ 
	\subfloat[\tb{generated by  $\Omega^{-1}(\bar F^{(k)}_{\mathrm{mis}})$, $k=1,2,3,4$, and $\Omega^{-1}(\bar F^{(2)}_{\mathrm{id}})$}]{\includegraphics[scale=.85]{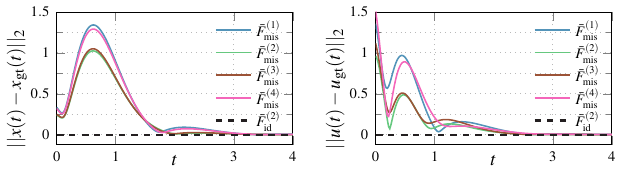}\label{fig:figc}}
	\caption{{\tb{\small Panels (a) and (b) illustrate state (left) and control (right)  trajectories. Panel (c) illustrates state error (left) and control error (right) trajectories due to model misspecification.}}}
\end{figure}
\subsubsection*{Misspecification}
\tb{To quantify the effect of ignoring the descriptor structure, we apply the inverse LQDG method of~\cite{Inga2019CSL} with $E=I_3$, thereby replacing the descriptor interaction model by a standard ODE model and treating the instantaneous steering constraint $\delta=(u_h+u_a)/K_s$ as a dynamic equation. This yields
$\theta_1^{\mathrm{mis}}=[0.1161~ 0.2721~ -1.3731~ 0.6642~ 1.0042~ 1.0853~ 0.4184~ -0.5001]^\top$
and
$\theta_2^{\mathrm{mis}}=[0.0119~ 0.2462~ -0.1107~ -0.7575~ 0.0731~ -3.1384~ -1.6787~ 0.0508]^\top$.
However, the LQDDG null-space conditions~\eqref{eq:Gamma_a} give
$\|\mathcal{M}_1\theta_1^{\mathrm{mis}}\|=3.31\neq 0$ and
$\|\mathcal{M}_2\theta_2^{\mathrm{mis}}\|=0.28\neq 0$,
so $(\theta_1^{\mathrm{mis}},\theta_2^{\mathrm{mis}})\notin\Theta(F_\mathrm{gt})$, confirming the failure in identification; see Remark~\ref{eq:QPalg}. The corresponding forward descriptor game admits four stabilizing solutions of the CARE~\eqref{eq:CARE}, with reduced feedback matrices
\begin{align*}
	\bar F_\mathrm{mis}^{(1)}&=\begin{mymatrix}0.5366 & 0.1129\\ -5.1587 & -0.5605\end{mymatrix},\quad
	\bar F_\mathrm{mis}^{(2)}=\begin{mymatrix}-0.3965 & -0.1167\\ -5.3210 & -0.3660\end{mymatrix},\\
	\bar F_\mathrm{mis}^{(3)}&=\begin{mymatrix}0.3495 & -0.1167\\ -5.9530 & -0.3660\end{mymatrix},\quad
	\bar F_\mathrm{mis}^{(4)}=\begin{mymatrix}-0.5757 & 0.1129\\ -4.2164 & -0.5605\end{mymatrix}.
\end{align*}
None of the FBNE in $\Omega^{-1}(\bar F^{(k)}_\mathrm{mis})$, $k=1,2,3,4$, reproduce the observed behavior. Fig.~\ref{fig:figc} shows the resulting state error ($||x(t)-x_\mathrm{gt}(t)||_2$) and control error ($||u(t)-u_\mathrm{gt}(t)||_2$) trajectories relative to the ground truth. This misspecification replaces the instantaneous steering constraint with an artificial first-order steering dynamics, thereby attributing constraint-induced behavior to strategic preferences, as discussed in Section~I.}

\section{Conclusions} 
\label{sec:Conclusions}
 \tb{In this paper, we formulated and solved an inverse problem for infinite-horizon linear--quadratic descriptor differential games. Given an observed feedback strategy profile, we characterized all cost parameters that rationalize it as a feedback Nash equilibrium. We showed that the resulting solution set is rectangular and convex, and proposed a convex optimization program for computing an admissible realization. Compared with the standard inverse LQDG setting, we showed that descriptor dynamics may enlarge the solution set, thereby reducing identifiability. A natural direction for future work is to extend the framework to a model-free setting.}

\bibliographystyle{IEEEtran}
\bibliography{references_DAE_Final} 
\end{document}